\definecolor{shadecolor}{rgb}{.61,.87,1}
\DeclareMathOperator{\tr}{tr}
\DeclareMathOperator{\SL}{SL}
\DeclareMathOperator{\PSL}{PSL}
\DeclareMathOperator{\rank}{rank}
\newcommand{\C}{\mathbb{C}}
\newcommand{\E}{\mathbb{E}}
\newcommand{\R}{\mathbb{R}}
\newcommand{\F}{\mathbb{F}}
\newcommand{\ve}{\varepsilon}
\newcommand{\1}{^{-1}}
\newcommand{\f}[2]{\frac{#1}{#2}}
\newtheorem{theorem}{Theorem}[section]
\newtheorem{corollary}[theorem]{Corollary}
\newtheorem{proposition}[theorem]{Proposition}
\newtheorem{lemma}[theorem]{Lemma}
\numberwithin{theorem}{section}
\title{Mixing for three-term progressions in finite simple groups}
\author{Sarah Peluse}
\address{Department of Mathematics, Stanford University, Stanford, California 94305}
\email{speluse@stanford.edu}
\begin{document}

\begin{abstract}
Answering a question of Gowers, Tao proved that any $A\times B\times C\subset \SL_d(\F_q)^3$ contains $\f{|A||B||C|}{|\SL_d(\F_q)|}+O_d(\f{|\SL_d(\F_q)|^2}{q^{\min(d-1,2)/8}})$ three-term progressions $(x,xy,xy^2)$. Using a modification of Tao's argument, we prove such a mixing result for three-term progressions in all nonabelian finite simple groups except for $\PSL_2(\F_q)$ with an error term that depends on the degree of quasirandomness of the group. This argument also gives an alternative proof of Tao's result when $d>2$, but with the error term $O(\f{|\SL_d(\F_q)|^2}{q^{(d-1)/24}})$.
\end{abstract}

\maketitle

\section{Introduction}

Let $G$ be a finite group with no nontrivial representations of degree less than $D$. Such groups are called \textit{$D$-quasirandom}. In~\cite{G}, Gowers showed that if $G$ is $D$-quasirandom, then
\[
\#\{(x,y,xy)\in A\times B\times C: x,y\in G\}=\f{|A||B||C|}{|G|}+O\left(\sqrt{\f{|G||A||B||C|}{D}}\right)
\]
for every $A,B,C\subset G$. The quantity $|A||B||C|/|G|$ is the number of triples $(x,y,xy)$ that we would expect to lie in the Cartesian product of three random subsets of $G$ of the same size as $A$, $B$, and $C$. Thus, whenever $|A||B||C|$ is sufficiently large, the set $A\times B\times C$ contains close to the expected number of triples $(x,y,xy)$.

Gowers asked in~\cite{G} whether a similar result holds in quasirandom groups for configurations other than $(x,y,xy)$, in particular for three-term progressions $(x,xy,xy^2)$. This was answered in the affirmative by Tao in~\cite{T} for the family of groups $\SL_d(\F_q)$ with $d$ bounded, all of whose nontrivial representations have degree $\gg q^{d-1}$ (see~\cite{LaS}). Let $r_{G}(A,B,C)$ denote the number of three-term progressions in $A\times B\times C$ for $A,B,C\subset G$. Tao showed that
\begin{equation}\label{t}
r_{\SL_d(\F_q)}(A,B,C)=\f{|A||B||C|}{|\SL_d(\F_q)|}+\begin{cases}O\left(\f{|\SL_d(\F_q)|^2}{q^{1/8}}\right) & d=2 \\ O_d\left(\f{|\SL_d(\F_q)|^2}{q^{1/4}}\right) & d>2 \end{cases}
\end{equation}
for any $A,B,C\subset\SL_d(\F_q)$. His proof relies on algebraic geometry, and thus does not seem to generalize to prove a similar result for the family of alternating groups $A_n$, for example, which are $(n-1)$-quasirandom.

In this note, we give a simple modification of Tao's argument that produces a nontrivial bound for $|r_G(A,B,C)-|A||B||C|/|G||$ when $G$ is any nonabelian finite simple group other than $\PSL_2(\F_q)$. This argument also recovers~(\ref{t}) when $d>2$, but with the error term $O(|\SL_d(\F_q)|^2/q^{(d-1)/24})$.

Letting $\hat{G}$ denote the set of irreducible unitary representations of $G$, $1$ the trivial representation of $G$, and $d_\rho$ the degree of $\rho\in\hat{G}$, we have the following theorem:
\begin{theorem}\label{main}
Let $G$ be a finite group and $A,B,C\subset G$. Then
\begin{equation}\label{bd}
r_{G}(A,B,C)=\f{|A||B||C|}{|G|}+O\left(\left(\sum_{1\neq\rho\in\hat{G}}\f{1}{d_\rho}\right)^{1/8}|G|^2\right).
\end{equation}
\end{theorem}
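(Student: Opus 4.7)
The plan is to adapt Tao's Cauchy--Schwarz/Fourier strategy from~\cite{T}, replacing the algebraic-geometric control of the squaring map in $\SL_d(\F_q)$ by a Plancherel-type estimate involving $\sum_{\rho \neq 1} d_\rho\1$ valid in any finite group. First I would extract the main term: writing $\mathbf{1}_A = \alpha + f$, $\mathbf{1}_B = \beta + g$, $\mathbf{1}_C = \gamma + h$ with $\alpha = |A|/|G|$ etc.\ and $f, g, h \colon G \to [-1, 1]$ of mean zero, expanding $r_G(A, B, C)$ into eight pieces yields the all-constant piece $\alpha\beta\gamma|G|^2 = |A||B||C|/|G|$ as the main term. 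Pieces with a single mean-zero factor vanish by Fubini. Pieces with two mean-zero factors either vanish identically under the bijection $(z,w) := (xy, xy^2)$ of $G \times G$ (inverse $(x,y) = (zw\1z, z\1w)$), or reduce to a single correlation $\beta\sum_{x,y} f(x) h(xy^2)$; the latter is controlled via Plancherel plus the Frobenius--Schur identity $\sum_y \rho(y)^2 = (\nu(\rho)|G|/d_\rho)I$, yielding an error of size $|G|^2(\sum_{\rho \neq 1} d_\rho\1)^{1/2}$, comfortably below the target.

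The remaining task is to bound the trilinear form $T(f,g,h) := \sum_{x,y} f(x)g(xy)h(xy^2)$. The $(z,w)$ substitution converts $T$ into $\sum_{z,w} f(zw\1 z)g(z)h(w)$, decoupling $g$ and $h$ while repackaging the squaring into the factor $zw\1z$. I would then apply Cauchy--Schwarz three times. A first Cauchy--Schwarz in $w$ gives $|T|^2 \leq |G|\sum_w|\sum_z f(zw\1z)g(z)|^2$; expanding the square, substituting $s := z\1z'$, and introducing the autocorrelation $\Theta(a,s) := \sum_u f(u)\ol{f(aus)}$, one finds $|T|^2$ is dominated by $|G|\sum_{s,z} g(z)\ol{g(zs)}\Theta(zsz\1, s)$. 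Two further Cauchy--Schwarzes---first in $(s,z)$, then to turn the remaining average of $\rho(zsz\1)$ over $z$ into a manageable expression---reduce the bound to controlling $\sum_{a,s}|\Theta(a,s)|^2$, which by Schur orthogonality equals $\sum_\rho \|\hat f(\rho)\|_{HS}^4$.

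The Plancherel endgame uses the key estimate
\[
\sum_{\rho\neq 1}\|\hat f(\rho)\|_{HS}^2 \;\leq\; \Bigl(\sum_{\rho\neq 1}\f{1}{d_\rho}\Bigr)|G|^2,
\]
which follows by combining Plancherel's identity $\sum_\rho d_\rho\|\hat f(\rho)\|_{HS}^2 = |G|\|f\|_2^2$ with the individual bound $d_\rho\|\hat f(\rho)\|_{HS}^2 \leq |G|^2$ (since $\|f\|_2^2 \leq |G|$). Combined with $\max_\rho\|\hat f(\rho)\|_{HS}^2 \leq |G|^2$, this gives $\sum_{\rho\neq 1}\|\hat f(\rho)\|_{HS}^4 \leq |G|^4\sum_{\rho\neq 1} d_\rho\1$, and unwinding the three Cauchy--Schwarzes produces $|T|^8 \leq C|G|^{16}\sum_{\rho\neq 1}d_\rho\1$, which is Theorem~\ref{main}. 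The main obstacle is the third Cauchy--Schwarz and the accompanying bookkeeping of $|G|$-factors: a naive two-Cauchy--Schwarz version produces only $|T|\leq(\sum_{\rho\neq 1}d_\rho\1)^{1/4}|G|^{9/4}$, losing a factor of $|G|^{1/4}$ over the target. The extra round is needed to absorb the $|G|$-loss from the trivial centralizer bound $|C_G(s)|\leq|G|$ used when averaging $\rho(zsz\1)$ over $z$; with this step in place, the final Plancherel application is routine.
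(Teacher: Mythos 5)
Your overall strategy---repeated Cauchy--Schwarz to reduce the trilinear form to a Plancherel-type sum involving $\sum_{\rho\neq 1}d_\rho^{-1}$---is the same spirit as the paper, but the route is genuinely different, and the crucial step is not sound as described.

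First, a minor structural difference: the paper does not decompose all three indicators. It only writes $1_C = |C|/|G| + f_C$ and observes that $\Lambda(1_A,1_B,1) = |A||B|/|G|^2$ exactly, so the whole error is $\Lambda(1_A,1_B,f_C)$ with just the third slot balanced. Your eight-piece decomposition works, but it forces you to separately control the piece $\beta\sum_{x,y}f(x)h(xy^2)$, whose treatment via $\widehat{\operatorname{sq}}(\rho) = (\nu(\rho)|G|/d_\rho)I$ is correct but is extra work the paper simply avoids by not perturbing the first two arguments.

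The real problem is the handling of the main term $T(f,g,h) = \sum_{z,w}f(zw^{-1}z)g(z)h(w)$. After your first Cauchy--Schwarz in $w$ and the substitution, you arrive at $|T|^2 \leq |G|\sum_{s,z}g(z)\overline{g(zs)}\,\Theta(zsz^{-1},s)$, and after a second Cauchy--Schwarz in $(s,z)$ at $|T|^4 \leq |G|^4\sum_{s,z}|\Theta(zsz^{-1},s)|^2$. At this point the variable $z$ appears only inside $\Theta(zsz^{-1},s)$; it is not a free average against a fixed integrand, so there is no place where ``averaging $\rho(zsz^{-1})$ over $z$'' in the sense of Schur's lemma can be extracted. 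The only available move is the trivial $\sum_z|\Theta(zsz^{-1},s)|^2 = |Z(s)|\sum_{a\in C(s)}|\Theta(a,s)|^2 \leq |G|\sum_a|\Theta(a,s)|^2$, and with $\sum_{a,s}|\Theta(a,s)|^2 = \sum_\rho\|\hat f(\rho)\|_{HS}^4 \leq |G|^4\sum_{\rho\neq 1}d_\rho^{-1}$ this gives $|T|^4 \leq |G|^9\sum_{\rho\neq 1}d_\rho^{-1}$. Squaring via a third Cauchy--Schwarz yields $|T|^8 \lesssim |G|^{18}(\sum d_\rho^{-1})^2$, which beats the target $|G|^{16}\sum d_\rho^{-1}$ only when $\sum_{\rho\neq 1}d_\rho^{-1}\leq |G|^{-2}$ --- impossible, since every $d_\rho\leq|G|^{1/2}$ forces $\sum_{\rho\neq 1}d_\rho^{-1}\geq|G|^{-1/2}$. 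Another Cauchy--Schwarz applied downstream of the $|Z(s)|$ loss can only square, not repair, the deficit; your sentence about the third Cauchy--Schwarz ``absorbing the $|G|$-loss'' does not identify a step that actually does so.

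By contrast, the paper's two Cauchy--Schwarzes (performed in the original $(x,y)$ coordinates, pulling out the \emph{middle} function first) land on $\E_{x,y,b,g}\Delta_b f_3(x)\Delta_{g^{-1}bg}f_3(xy^{-1}gyg)$, where the variable $y$ appears in only one factor. This free $y$-average produces the translated conjugacy-class measure $\mu_{g^{-1}C(g^{-1})}$, and Schur's lemma applied to $\hat\mu_{C(g^{-1})}(\rho)$ reduces everything to character values $\chi_\rho(g)/d_\rho$; combined with inequality~(\ref{one}) of Lemma~\ref{convolution} and column orthogonality $\E_g|\chi_\rho(g)|^2=1$, this yields $|\Lambda|^4\ll(\sum_{\rho\neq 1}d_\rho^{-1})^{1/2}$ with no centralizer loss. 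The free average is the key feature your $(z,w)$-substitution destroys. To salvage your route you would need either to keep one variable genuinely free before expanding the square, or to replace the trivial $|Z(s)|\leq|G|$ step by an argument exploiting that $\Theta(\cdot,s)$ correlates with a class function --- which, unwound, is essentially the paper's conjugacy-class-measure argument in different clothes.
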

The error term in~(\ref{bd}) is too large to be useful when $G=\SL_2(\F_q)$ or $\PSL_2(\F_q)$, since in these cases $G$ has $\asymp q$ representations of degree $\asymp q$. However, by work of Liebeck and Shalev~\cite{LS1}~\cite{LS3}, the quantity $\sum_{1\neq\rho\in\hat{G}}1/d_\rho$ is quite small for many other groups of interest. Combining Theorem~\ref{main} with Liebeck and Shalev's bounds and with Tao's result for $\PSL_2(\F_q)$ yields the following corollary:
\begin{corollary}\label{cor1}
Let $G$ be a finite simple group that is $D$-quasirandom. Then
\[
r_G(A,B,C)=\f{|A||B||C|}{|G|}+O\left(\f{|G|^2}{D^{1/24}}\right)
\]
for every $A,B,C\subset G$.
\end{corollary}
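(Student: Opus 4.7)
The plan is to combine Theorem~\ref{main} with uniform Witten zeta function estimates due to Liebeck and Shalev, treating $\PSL_2(\F_q)$ separately via Tao's result.

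\emph{Step 1 (the exceptional family).} For $G = \PSL_2(\F_q)$, Theorem~\ref{main} is too weak to yield the claimed bound: such $G$ have $\asymp q$ nontrivial irreducible representations, each of degree $\asymp q$, so $\sum_{\rho \neq 1} 1/d_\rho$ is of order $1$ and the error in~(\ref{bd}) is trivial. Instead, I would apply Tao's bound~(\ref{t}) in the $\SL_2(\F_q)$ case (the argument there descends to $\PSL_2(\F_q)$), giving an error of $O(|G|^2/q^{1/8})$. Since $\PSL_2(\F_q)$ is $D$-quasirandom with $D \asymp q$, this is $O(|G|^2/D^{1/8})$, much stronger than required.

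\emph{Step 2 (interpolation for the remaining cases).} Let $G$ be any other nonabelian finite simple group, and apply Theorem~\ref{main}. Since every nontrivial $\rho \in \hat G$ has degree $d_\rho \geq D$, one can interpolate
\[
\sum_{1 \neq \rho \in \hat G} \frac{1}{d_\rho} \;=\; \sum_{1 \neq \rho} \frac{1}{d_\rho^{2/3}} \cdot \frac{1}{d_\rho^{1/3}} \;\leq\; \frac{1}{D^{1/3}} \sum_{\rho \in \hat G} \frac{1}{d_\rho^{2/3}}.
\]
The exponent $2/3$ is forced by matching: the corollary claims error $O(|G|^2/D^{1/24})$, Theorem~\ref{main} produces $\bigl(\sum_{\rho\neq 1} 1/d_\rho\bigr)^{1/8}|G|^2$, and $1/24 = (1 - 2/3)/8$.

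\emph{Step 3 (the Liebeck--Shalev input).} The final ingredient is the uniform bound
\[
\sum_{\rho \in \hat G} \frac{1}{d_\rho^{2/3}} \;=\; O(1),
\]
valid for every nonabelian finite simple group other than $\PSL_2(\F_q)$, which is supplied by the Witten zeta function estimates of Liebeck and Shalev~\cite{LS1,LS3}. Plugging this into Step~2 gives $\sum_{\rho\neq 1} 1/d_\rho = O(D^{-1/3})$, and feeding this into Theorem~\ref{main} produces error $O(|G|^2/D^{1/24})$, as desired.

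The main obstacle is Step~3: one must verify that the Liebeck--Shalev estimates actually deliver $\sum_\rho d_\rho^{-2/3} = O(1)$ uniformly across all the relevant families of nonabelian finite simple groups---classical groups of Lie type in both bounded and unbounded rank, exceptional groups of Lie type, alternating groups, and sporadic groups---which requires a case-by-case check since the structure of the Liebeck--Shalev bounds differs between these families. The family $\PSL_2(\F_q)$ is the unique one where this estimate genuinely fails (the sum grows like $q^{1/3}$), which is why the external Tao input of Step~1 cannot be dispensed with there.
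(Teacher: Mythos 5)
Your plan is the paper's plan: handle $\PSL_2(\F_q)$ separately via Tao's bound~(\ref{t}), and for every other nonabelian finite simple group interpolate $\sum_{1\neq\rho}d_\rho^{-1}\leq D^{-1/3}(\zeta_G(2/3)-1)$ and feed the result into Theorem~\ref{main}, the exponent $2/3$ being forced by $1/24=(1/3)\cdot(1/8)$. However, you leave the crucial Step~3 --- the uniform bound $\zeta_G(2/3)-1\ll 1$ for all nonabelian finite simple groups other than $\PSL_2(\F_q)$ --- as an unverified assertion, and this is exactly where the substantive content of the paper's proof lies. The paper establishes it using the classification of finite simple groups together with three Liebeck--Shalev inputs: Lemma~2.7 of~\cite{LS1} gives $\zeta_{A_n}(2/3)-1\ll n^{-2/3}$ for alternating groups; Theorem~\ref{ls2} (Theorem~1.2 of~\cite{LS3}) handles Lie type groups of rank at least some fixed $r(2/3)$; and for the finitely many remaining Lie types of bounded rank, Theorem~\ref{ls1} (Theorem~1.1 of~\cite{LS3}) gives $\zeta_G(2/3)\ll_L 1$ provided $2/3>2/h$, i.e.\ provided the Coxeter number $h$ is at least~$3$. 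The pivotal observation missing from your sketch is that the only Lie type with Coxeter number less than~$3$ is $A_1$ (namely $\PSL_2$, with $h=2$); this is the structural reason $\PSL_2(\F_q)$ is the unique genuinely exceptional family, not merely the one you already noticed defeats Theorem~\ref{main}. (Sporadic groups are finite in number and absorbed by the implied constant.) So the approach is right and the exponent is right, but the case analysis that actually proves Step~3 is the part you would still need to supply.
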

One can use the bounds from~\cite{LS1} and~\cite{LS3} to get a better error term than the one in Corollary~\ref{cor1} for some specific groups.
\begin{corollary}\label{cor2}
We have that
\begin{equation}\label{an}
r_{A_n}(A,B,C)=\f{|A||B||C|}{|A_n|}+O\left(\f{|A_n|^2}{n^{1/8}}\right)
\end{equation}
for all $A,B,C\subset A_n$, that
\begin{equation}\label{sld}
r_{\SL_d(\F_q)}(A,B,C)=\f{|A||B||C|}{|\SL_d(\F_q)|}+O_d\left(\f{|\SL_d(\F_q)|^2}{q^{(d-1)(1-2/d)/8}}\right),
\end{equation}
for all $A,B,C\subset\SL_d(\F_q)$, and that, for any $\ve>0$, there exists a $d(\ve)>0$ such that whenever $d\geq d(\ve)$, we have
\begin{equation}\label{sl}
r_{\SL_d(\F_q)}(A,B,C)=\f{|A||B||C|}{|\SL_d(\F_q)|}+O\left(\f{|\SL_d(\F_q)|^2}{q^{(d-1)(1-\ve)/8}}\right)
\end{equation}
for all $A,B,C\subset \SL_d(\F_q)$.
\end{corollary}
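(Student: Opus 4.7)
The plan is to deduce Corollary~\ref{cor2} as an immediate consequence of Theorem~\ref{main}: the error term in~(\ref{bd}) depends on the group $G$ only through the partial Witten zeta value $\sum_{1\neq\rho\in\hat{G}}1/d_\rho$, so each of~(\ref{an}),~(\ref{sld}), and~(\ref{sl}) reduces to a bound on this single sum for the relevant family of simple groups. Bounds of exactly this shape are provided by the work of Liebeck and Shalev in~\cite{LS1} and~\cite{LS3}, and the corollary is essentially the product of these two inputs.

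First I would treat the alternating case by quoting the estimate $\sum_{1\neq\rho\in\hat{A_n}}1/d_\rho=O(1/n)$ from~\cite{LS1}. Inserting this into~(\ref{bd}) and taking the eighth root yields exactly the error $|A_n|^2/n^{1/8}$ of~(\ref{an}). Next I would handle the fixed-$d$ case of $\SL_d(\F_q)$ by applying the Witten zeta estimate for finite groups of Lie type from~\cite{LS3}, specialized to $\SL_d$; this gives $\sum_{1\neq\rho\in\hat{\SL_d(\F_q)}}1/d_\rho=O_d(q^{-(d-1)(1-2/d)})$, whose eighth root produces the error term in~(\ref{sld}). For the large-$d$ statement~(\ref{sl}), I would use the refined Liebeck--Shalev bound asserting that for every $\ve>0$ there exists $d(\ve)$ such that $\sum_{1\neq\rho\in\hat{\SL_d(\F_q)}}1/d_\rho=O(q^{-(d-1)(1-\ve)})$ whenever $d\geq d(\ve)$; substituting into~(\ref{bd}) yields~(\ref{sl}). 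Note that this strategy is genuinely vacuous when $d=2$, since the exponent $(d-1)(1-2/d)$ then equals $0$ and the bound is trivial; this is consistent with the introductory discussion that Theorem~\ref{main} is too weak to treat $\SL_2(\F_q)$ or $\PSL_2(\F_q)$.

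The main (and essentially only) task is thus bookkeeping: matching the precise Witten zeta exponents appearing in~\cite{LS1} and~\cite{LS3} to the target exponents $1/8$, $(d-1)(1-2/d)/8$, and $(d-1)(1-\ve)/8$ after the eighth root from Theorem~\ref{main}. I do not expect any substantive obstacle here, because all of the representation-theoretic content is already packaged into the cited Liebeck--Shalev estimates and all of the combinatorial/mixing content into Theorem~\ref{main}; the remaining step is simply to combine them and simplify.
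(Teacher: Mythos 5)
Your proposal follows the paper's deduction exactly: plug the Liebeck--Shalev Witten zeta bounds from \cite{LS1} and \cite{LS3} into the error term of Theorem~\ref{main} and take eighth roots. The one caveat is that \cite{LS3} only gives boundedness of $\zeta_G(s)-1$ at $s=2/d$ (respectively at $s=\ve$ for $d\geq d(\ve)$), so the $q$-power decay at $s=1$ that you attribute directly to Liebeck--Shalev must, as in the paper, be extracted via the elementary inequality $\zeta_G(1)-1\leq D^{-\delta}(\zeta_G(1-\delta)-1)$ for $D$-quasirandom groups together with the fact that $\SL_d(\F_q)$ is $\gg q^{d-1}$-quasirandom.
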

It appears that the most difficult case of bounding $|r_{\SL_d(\F_q)}(A,B,C)-|A||B||C|/|\SL_d(\F_q)||$ is when $d=2$. Here Theorem~\ref{main} is useless, and Tao needed to use properties of $\SL_2(\F_q)$ other than the degrees of its irreducible representations to get a nontrivial bound in~\cite{T}.

This paper is organized as follows. In Section~\ref{not}, we set notation and review some basic facts about quasirandom groups. We prove Theorem~\ref{main} in Section~\ref{thm}, and then show how to deduce Corollary~\ref{cor1} and Corollary~\ref{cor2} from the bounds of Liebeck and Shalev in Section~\ref{cors}.

\section*{Acknowledgments}
The author thanks Kannan Soundararajan for many helpful comments on earlier drafts of this note and Persi Diaconis for help with references. The author is supported by the National Science Foundation Graduate Research Fellowship Program under Grant No. DGE-114747 and by the Stanford University Mayfield Graduate Fellowship.

\section{Notation and preliminaries}\label{not}
Let $G$ be a finite group. The indicator function and normalized indicator function of a set $S\subset G$ will be denoted by $1_S$ and $\mu_S=\f{1}{|S|}1_S$, respectively. For all functions $f_1,f_2: G\to\R$ and $S\subset G$, we write
\[
\|f_1\|_p^p=\sum_{x\in G}|f_1(x)|^p,
\]
\[
\E_{x\in S}f_1(x)=\f{1}{|S|}\sum_{x\in S}f_1(x),
\]
and
\[
(f_1*f_2)(x)=\sum_{y\in G}f_1(xy\1)f_2(y).
\]
We will also write $\E_{x}$ instead of $\E_{x\in G}$ when averaging over all of $G$.

The Fourier transform of $f_1$ at $\rho\in\hat{G}$ is given by
\[
\hat{f_1}(\rho)=\sum_{x\in G}f_1(x)\rho(x).
\]
As usual, we have that $\widehat{f_1*f_2}(\rho)=\hat{f_1}(\rho)\hat{f_2}(\rho)$ for every $\rho\in\hat{G}$, and also Parseval's identity:
\[
\|f_1\|_2^2=\sum_{x\in G}f_1(x)^2=\f{1}{|G|}\sum_{\rho\in\hat{G}}d_\rho\|\hat{f_1}(\rho)\|_{HS}^2,
\]
where $\|\cdot\|_{HS}$ is the Hilbert-Schmidt norm $\|M\|_{HS}^2=\tr(M^*M)$, which is submultiplicative. See~\cite{D}, for example, for background on nonabelian Fourier analysis.

Inequality~(\ref{two}) in the following lemma is due to Babai, Nikolov, and Pyber~\cite{BNP}. Inequality~(\ref{one}) is just a tiny modification of it that will be necessary in the proof of Theorem~\ref{main}.
\begin{lemma}\label{convolution}
Suppose that $f_1,f_2:G\to\R$ and $\E_{x}f_1(x)=0$. Then
\begin{equation}\label{one}
\|f_1*f_2\|_2\leq \|f_1\|_2\left(\sum_{1\neq\rho\in\hat{G}}\|\hat{f_2}(\rho)\|_{HS}^2\right)^{1/2}.
\end{equation}
If, in addition, $G$ is $D$-quasirandom, then
\begin{equation}\label{two}
\|f_1*f_2\|_2\leq\f{|G|^{1/2}}{D^{1/2}}\|f_1\|_2\|f_2\|_2.
\end{equation}
\end{lemma}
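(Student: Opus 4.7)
The plan is to derive both inequalities directly from Parseval and the submultiplicativity of the Hilbert--Schmidt norm, exploiting the hypothesis $\E_x f_1(x)=0$ to kill the trivial representation.

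\textbf{Step 1: Fourier side via Parseval.} First I apply Parseval to $f_1*f_2$ to write
\[
\|f_1*f_2\|_2^2 = \f{1}{|G|}\sum_{\rho\in\hat{G}}d_\rho\|\widehat{f_1*f_2}(\rho)\|_{HS}^2 = \f{1}{|G|}\sum_{\rho\in\hat{G}}d_\rho\|\hat{f_1}(\rho)\hat{f_2}(\rho)\|_{HS}^2.
\]
The key observation is that the $\rho=1$ term vanishes: for the trivial representation $\hat{f_1}(1)=\sum_x f_1(x)=|G|\,\E_x f_1(x)=0$, so the sum effectively runs over $\rho\neq 1$.

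\textbf{Step 2: Submultiplicativity and pulling out $\hat{f_2}$.} Using submultiplicativity $\|\hat{f_1}(\rho)\hat{f_2}(\rho)\|_{HS}\leq \|\hat{f_1}(\rho)\|_{HS}\|\hat{f_2}(\rho)\|_{HS}$, I bound
\[
\|f_1*f_2\|_2^2 \leq \f{1}{|G|}\sum_{1\neq\rho\in\hat{G}}d_\rho\|\hat{f_1}(\rho)\|_{HS}^2\,\|\hat{f_2}(\rho)\|_{HS}^2 \leq \left(\sum_{1\neq\rho\in\hat{G}}\|\hat{f_2}(\rho)\|_{HS}^2\right)\cdot \f{1}{|G|}\sum_{\rho\in\hat{G}}d_\rho\|\hat{f_1}(\rho)\|_{HS}^2,
\]
by pulling out the supremum-style factor and extending the last sum back to all of $\hat{G}$. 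Another application of Parseval identifies the final factor with $\|f_1\|_2^2$, giving inequality~(\ref{one}) after taking square roots.

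\textbf{Step 3: Deducing (\ref{two}) from (\ref{one}).} If $G$ is $D$-quasirandom, then $d_\rho\geq D$ for every $\rho\neq 1$, so Parseval applied to $f_2$ yields
\[
\|f_2\|_2^2 = \f{1}{|G|}\sum_{\rho\in\hat{G}}d_\rho\|\hat{f_2}(\rho)\|_{HS}^2 \geq \f{D}{|G|}\sum_{1\neq\rho\in\hat{G}}\|\hat{f_2}(\rho)\|_{HS}^2,
\]
so $\sum_{1\neq\rho\in\hat{G}}\|\hat{f_2}(\rho)\|_{HS}^2 \leq (|G|/D)\|f_2\|_2^2$. Substituting this into~(\ref{one}) produces~(\ref{two}).

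\textbf{Main obstacle.} There is no real obstacle here; the proof is a short Fourier-analytic computation. The only subtlety is choosing the right factorization on the Fourier side so that one factor absorbs the $d_\rho$ weight (to recover $\|f_1\|_2^2$ by Parseval) while the other factor loses the $d_\rho$ weight (so that~(\ref{one}) gives a nontrivial inequality when $\hat{f_2}$ is concentrated on large-dimensional representations). This asymmetric packaging is precisely what makes~(\ref{one}) useful beyond the original Babai--Nikolov--Pyber bound~(\ref{two}) when one lacks uniform quasirandomness but still controls $\sum_{1\neq\rho}\|\hat{f_2}(\rho)\|_{HS}^2$ directly.
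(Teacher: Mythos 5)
Your proof is correct and follows essentially the same route as the paper: Parseval applied to $f_1*f_2$ with the trivial term killed by $\hat{f_1}(1)=0$, submultiplicativity of the Hilbert--Schmidt norm, and then Parseval again to recover $\|f_1\|_2^2$ (your bounding of each $\|\hat{f_2}(\rho)\|_{HS}^2$ by the full nontrivial sum is the same elementary step the paper performs on the $f_1$ side), with~(\ref{two}) deduced from~(\ref{one}) via $d_\rho\geq D$ exactly as in the paper.
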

\begin{proof}
By Parseval's identity, for any $\rho\in\hat{G}$ we have
\[
\|f_1\|_2^2=\f{1}{|G|}\sum_{\xi\in\hat{G}}d_\xi\|\hat{f_1}(\xi)\|_{HS}^2\geq\f{d_\rho}{|G|}\|\hat{f_1}(\rho)\|_{HS}^2,
\]
so that $\|\hat{f_1}(\rho)\|_{HS}^2\leq\f{|G|}{d_\rho}\|f_1\|_2^2$. Since $\hat{f_1}(1)=0$, Parseval's identity again implies that
\begin{align*}
\|f_1*f_2\|_2^2 &= \f{1}{|G|}\sum_{1\neq\rho\in\hat{G}}d_\rho\|\hat{f_1}(\rho)\hat{f_2}(\rho)\|_{HS}^2 \\
&\leq \f{1}{|G|}\sum_{1\neq\rho\in\hat{G}}d_\rho\|\hat{f_1}(\rho)\|_{HS}^2\|\hat{f_2}(\rho)\|_{HS}^2 \\
&\leq\|f_1\|_2^2\sum_{1\neq\rho\in\hat{G}}\|\hat{f_2}(\rho)\|_{HS}^2,
\end{align*}
giving~(\ref{one}). Writing
\[
\sum_{1\neq\rho\in\hat{G}}\|\hat{f_2}(\rho)\|_{HS}^2=\f{|G|}{D} \f{1}{|G|}\sum_{1\neq\rho\in\hat{G}}D\|\hat{f_2}(\rho)\|_{HS}^2\leq\f{|G|}{D} \f{1}{|G|}\sum_{1\neq\rho\in\hat{G}}d_\rho\|\hat{f_2}(\rho)\|_{HS}^2
\]
when $G$ is $D$-quasirandom shows that~(\ref{two}) follows from~(\ref{one}).
\end{proof}
We will also need the following easy corollary of inequality~(\ref{two}) of Lemma~\ref{convolution}, which is Lemma 1.3 of~\cite{T}:
\begin{corollary}\label{avg}
Let $G$ be a $D$-quasirandom group and suppose that $f:G\to\R$ has mean zero. Then
\[
\E_{x\in G}|\E_{y\in G}f(y)f(yx)|\leq \f{\|f\|_2^2}{D^{1/2}|G|}.
\]
\end{corollary}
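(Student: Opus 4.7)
The plan is to rewrite the inner sum as a convolution, pass from an $L^1$ average to an $L^2$ average via Cauchy--Schwarz, and then apply inequality~(\ref{two}) of Lemma~\ref{convolution}.

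First I would recognize that $\sum_{y\in G} f(y)f(yx)$ is essentially a convolution of $f$ with its reflection. Define $\check{f}(z)=f(z\1)$ and substitute $z=yx$ to get
\[
\sum_{y\in G}f(y)f(yx)=\sum_{z\in G}f(zx\1)f(z)=(\check{f}*f)(x\1),
\]
using the convention $(f_1*f_2)(x)=\sum_y f_1(xy\1)f_2(y)$. Thus $\E_{y\in G}f(y)f(yx)=\f{1}{|G|}(\check{f}*f)(x\1)$.

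Next I would apply Cauchy--Schwarz in the outer average over $x$ to get
\[
\E_{x\in G}\bigl|\E_{y\in G}f(y)f(yx)\bigr|\leq\Bigl(\E_{x\in G}\bigl|\E_{y\in G}f(y)f(yx)\bigr|^2\Bigr)^{1/2}=\f{1}{|G|^{3/2}}\|\check{f}*f\|_2,
\]
where I used that inversion is a bijection on $G$ to drop the $x\1$.

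Finally, since $f$ has mean zero, so does $\check{f}$ (the sum over $G$ is unchanged), so inequality~(\ref{two}) of Lemma~\ref{convolution} gives
\[
\|\check{f}*f\|_2\leq\f{|G|^{1/2}}{D^{1/2}}\|\check{f}\|_2\|f\|_2=\f{|G|^{1/2}}{D^{1/2}}\|f\|_2^2.
\]
Combining the last two displays yields $\E_{x\in G}|\E_{y\in G}f(y)f(yx)|\leq\|f\|_2^2/(D^{1/2}|G|)$, as desired.

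There is no real obstacle here: the only trick is spotting the convolution identity, after which the rest is a routine application of Cauchy--Schwarz and the convolution bound already established in the lemma.
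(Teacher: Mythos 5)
Your proof is correct, and it follows the route the paper intends: the paper presents this as an easy corollary of inequality~(\ref{two}) and simply cites Tao's Lemma 1.3 rather than writing it out, and your argument — recognize the inner expectation as (a normalized value of) the autocorrelation $\check{f}*f$, pass from $L^1$ to $L^2$ by Cauchy--Schwarz, and then apply~(\ref{two}) using that $\check{f}$ has mean zero — is exactly the standard derivation. The one cosmetic point is that $\check{f}*f$ is invariant under $x\mapsto x^{-1}$ (it is a real autocorrelation), so your $(\check{f}*f)(x^{-1})$ and the equally valid $(\check{f}*f)(x)$ coincide and the step is harmless either way.
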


\section{Proof of Theorem~\ref{main}}\label{thm}
Following Tao~\cite{T}, we define the following trilinear form on the space of real-valued functions on $G$:
\[
\Lambda(f_1,f_2,f_3)=\E_{x,y\in G}f_1(x)f_2(xy)f_3(xy^2).
\]
Note that for any $A,B,C\subset G$, we have
\[
\Lambda(1_A,1_B,1_C)=\f{\#\{(x,xy,xy^2)\in A\times B\times C:x,y\in G\}}{|G|^2},
\]
so that $\Lambda(1_A,1_B,1_C)=|G|^{-2}r_{G}(A,B,C)$ is the normalized count of three-term progressions in $A\times B\times C$. Setting $f_C=1_C-\f{|C|}{|G|}$, we get
\[
|\Lambda(1_A,1_B,f_C)|=\left|\Lambda(1_A,1_B,1_C)-\f{|A||B||C|}{|G|^3}\right|,
\]
so that in order to bound the difference between the actual number of three-term progressions in $A\times B\times C$ and the expected number, it suffices to bound $|\Lambda(f_1,f_2,f_3)|$ when $f_3$ has mean zero. We do this in Proposition~\ref{prop} below, from which Theorem~\ref{main} follows immediately.

When $G$ is abelian, multilinear forms like $\Lambda$ can be bounded above in terms of Gowers norms of the arguments by repeatedly changing variables and applying Cauchy-Schwarz. Tao's idea in~\cite{T} was to try to carry out this strategy for $\Lambda$ when $G$ is not abelian. The first part of the proof of Proposition~\ref{prop} is essentially identical to the proof of Tao's Proposition 2.2 in~\cite{T}. Tao actually bounds $\E_{y}|\E_{x}f_1(x)f_2(xy)f_3(xy^2)|$, which yields a mixing result for the configuration $(x,xy,xy^2,y)$. Bounding $\E_{y}|\E_{x}f_1(x)f_2(xy)f_3(xy^2)|$, however, does not lead to an expression that is nice from the point of view of representation theory.

The main difference between our argument and Tao's is as follows. In the course of the proof, measures of the form $\mu_{gC(g)}$ appear, where $C(g)$ denotes the conjugacy class of $g\in G$. In Tao's argument, these measures are averaged over cosets of centralizers of elements in $G$ in order to get a measure that is closer to uniform. Instead, we use that $\mu_{gC(g)}$ is a translate of a class function, and thus has a very simple Fourier transform. Indeed, $\hat{\mu}_{gC(g)}(\rho)=\rho(g)\hat{\mu}_{C(g)}(\rho)$ for all $g\in G$ and $\rho\in\hat{G}$. Since $\mu_{C(g)}$ is a class function, Schur's lemma implies that $\hat{\mu}_{C(g)}(\rho)=c_{\rho,g}I_{d_\rho}$ for some $c_{\rho,g}\in\C$ depending only on $\rho$ and the conjugacy class of $g$. Taking the trace of both sides of
\[
\f{1}{|C(g)|}\sum_{h\in C(g)}\rho(h)=c_{\rho,g}I_{d_\rho}
\]
reveals that $c_{\rho,g}=\f{\chi_{\rho}(g)}{d_\rho}$, where $\chi_\rho$ denotes the character of $\rho$. Thus, we have $\hat{\mu}_{gC(g)}(\rho)=\f{\chi_{\rho}(g)}{d_\rho}\rho(g)$.

Another difference between the proofs is that, instead of using inequality~(\ref{two}) from Lemma~\ref{convolution} like Tao, we use inequality~(\ref{one}). In all nonabelian finite simple groups other than $\PSL_2(\F_q)$, most of the irreducible representations have degree much larger than the degree of quasirandomness. The use of~(\ref{one}) to take advantage of this is the reason why we can get error terms in Corollaries~\ref{cor1} and~\ref{cor2} that decay polynomially in the degree of quasirandomness of the group.
\begin{proposition}\label{prop}
Let $G$ be a finite group and suppose that $f_1,f_2,f_3: G\to\R$ with $\|f_1\|_\infty=\|f_2\|_\infty=\|f_3\|_\infty=1$ and $\E_{x}f_3(x)=0$. Then
\[
|\Lambda(f_1,f_2,f_3)|\ll\left(\sum_{1\neq\rho\in\hat{G}}\f{1}{d_\rho}\right)^{1/8}.
\]
\end{proposition}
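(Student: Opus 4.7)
Apply Cauchy--Schwarz three times to $\Lambda$ so that $|\Lambda|^8$ is bounded by a self-correlation of $f_3$ convolved with a measure of the form $\mu_{gC(g)}$; at that stage Lemma~\ref{convolution}(\ref{one}) and character orthogonality finish the argument.

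Substituting $u=xy$ rewrites $\Lambda(f_1,f_2,f_3) = \E_{u,y} f_1(uy\1) f_2(u) f_3(uy)$. Cauchy--Schwarz in $u$ (absorbing $f_2$ via $\|f_2\|_\infty \le 1$), expansion of the square, and the substitution $z=uy$, $z'=uy'$ yield
\[
|\Lambda|^2 \le \E_{u,z,z'} f_1(uz\1 u) f_1(uz'^{-1} u) f_3(z) f_3(z').
\]
Reparametrizing $z'=zk$ gives $uz'^{-1}u = (uk\1u\1)(uz\1u)$; the factor $h := uk\1u\1$ is the conjugate of $k\1$ by $u$, so as $u$ varies with $k$ fixed, $h$ is uniformly distributed on the conjugacy class $C(k\1)$. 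A second and third Cauchy--Schwarz, absorbing the $f_1$ factors via $\|f_1\|_\infty\leq 1$ and appropriately pairing the $f_3(z)f_3(zk)$ terms, reduce the problem to bounding (a constant multiple of) $\E_k\,\|f_3 * \mu_{kC(k)}\|_2^2$, where $\mu_{kC(k)}$ is the uniform probability measure on $kC(k)$.

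Inequality~(\ref{one}) of Lemma~\ref{convolution} applied with the mean-zero function $f_3$ then gives
\[
\|f_3 * \mu_{kC(k)}\|_2^2 \le \|f_3\|_2^2 \sum_{1\neq\rho\in\hat{G}} \|\hat{\mu}_{kC(k)}(\rho)\|_{HS}^2.
\]
Using the paper's identity $\hat{\mu}_{kC(k)}(\rho) = \chi_\rho(k)\rho(k)/d_\rho$ together with $\|\rho(k)\|_{HS}^2 = d_\rho$ (unitarity) produces $\|\hat{\mu}_{kC(k)}(\rho)\|_{HS}^2 = |\chi_\rho(k)|^2/d_\rho$; averaging over $k \in G$ and invoking the character orthogonality relation $\E_k |\chi_\rho(k)|^2 = 1$ yields $\E_k \|\hat{\mu}_{kC(k)}(\rho)\|_{HS}^2 = 1/d_\rho$. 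Summing over $\rho \neq 1$ and using $\|f_3\|_2^2 \le |G|$ gives $|\Lambda|^8 \ll \sum_{\rho \ne 1} 1/d_\rho$; taking an $8$th root finishes the proposition.

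\textbf{Main obstacle.} The delicate part is the bookkeeping in the three Cauchy--Schwarz steps together with the accompanying substitutions, chosen so that (i) the conjugation-by-$u$ structure emerges as a uniform average over a full conjugacy class (producing the measure $\mu_{gC(g)}$), and (ii) only $f_3$, the sole mean-zero factor, survives into the final convolution, with the other two $f_i$ absorbed through $L^\infty$ bounds. As the paper explains, Tao's argument instead averages $\mu_{gC(g)}$ over cosets of centralizers to smooth it, which discards precisely the Fourier information $\hat{\mu}_{gC(g)}(\rho)=\chi_\rho(g)\rho(g)/d_\rho$ that here yields the sharper $\sum 1/d_\rho$ bound rather than one governed only by the quasirandomness constant.
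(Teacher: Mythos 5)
Your endgame matches the paper exactly: the identity $\hat{\mu}_{kC(k)}(\rho)=\chi_\rho(k)\rho(k)/d_\rho$, the computation $\|\hat{\mu}_{kC(k)}(\rho)\|_{HS}^2=|\chi_\rho(k)|^2/d_\rho$, the use of inequality~(\ref{one}), and the orthogonality relation $\E_k|\chi_\rho(k)|^2=1$ are all the right moves and are executed correctly. The first Cauchy--Schwarz and the substitution exposing the conjugacy class of $k^{-1}$ are also fine. The problem is the middle of the argument, which you acknowledge is the delicate part, and where the claimed endpoint is in fact not the one the manipulations produce.

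The gap is this: you claim to arrive at $\E_k\,\|f_3*\mu_{kC(k)}\|_2^2$, i.e.\ a convolution of $f_3$ itself against the conjugacy-class measure, with $f_3$ mean zero so that (\ref{one}) applies directly. But two Cauchy--Schwarz steps applied after the $f_2$ absorption take the count of $f_3$-factors from $1$ to $2$ to $4$; the object that actually gets convolved against $\mu_{gC(g)}$ is a \emph{product} of two shifted copies of $f_3$, namely $\Delta_{c}f_3(z)=f_3(z)f_3(zc)$, exactly as in the paper's derivation of $\E_{x,b,g}\Delta_b f_3(x)\,(\Delta_{g^{-1}bg}f_3*\mu_{g^{-1}C(g^{-1})})(x)$. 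Crucially, $\Delta_c f_3$ is \emph{not} mean zero even when $f_3$ is, so one cannot feed it into Lemma~\ref{convolution}(\ref{one}) directly. The paper deals with this by writing $\Delta_{g^{-1}bg}f_3=f_{g^{-1}bg}+\delta_{g^{-1}bg}$ with $\delta_{g^{-1}bg}=\E_z\Delta_{g^{-1}bg}f_3(z)$ a constant, applying (\ref{one}) only to the mean-zero piece $f_{g^{-1}bg}$, and controlling the constant piece by Corollary~\ref{avg} (contributing the $D^{-1/2}$ term that is then absorbed using $D^{-1}\le\sum_{\rho\ne 1}1/d_\rho$). Your sketch skips this decomposition entirely; if you correct the convolved object to $\Delta_c f_3$, the mean-zero hypothesis of (\ref{one}) fails as stated, and if you insist it is literally $f_3$, the preceding Cauchy--Schwarz steps do not produce that. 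Filling in the ``delicate bookkeeping'' you flag will force you to confront this point and re-introduce the $\Delta f_3=f+\delta$ decomposition and Corollary~\ref{avg}, after which the proof becomes essentially the paper's.
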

\begin{proof}
Making the change of variables $x\mapsto xy\1$, we have
\[
\Lambda(f_1,f_2,f_3)=\E_{x\in G}f_2(x)\E_{y\in G}f_1(xy\1) f_3(xy).
\]
Applying Cauchy-Schwarz in the $x$ variable thus implies that
\[
|\Lambda(f_1,f_2,f_3)|^2\leq\E_{x,y_1,y_2\in G}f_1(xy_1\1)f_1(xy_2\1) f_3(xy_1)f_3(xy_2).
\]
After the change of variables $xy_1\1\mapsto x$, $y_1\mapsto y$, and $y_1y_2\1\mapsto a$, the right-hand side above becomes
\[
\E_{x,y,a}f_1(x)f_1(xa)f_3(xy^2)f_3(xya\1 y)=\E_{x,a}\Delta_af_1(x)\E_{y}\Delta_{y\1 a\1 y}f_3(xy^2),
\]
where $\Delta_cf(z)=f(z)f(zc)$.

Applying Cauchy-Schwarz in $x$ and $a$ yields
\[
|\Lambda(f_1,f_2,f_3)|^4\leq \E_{x,a,y_1,y_2}\Delta_{y_1\1 a\1 y_1}f_3(xy_1^2)\Delta_{y_2\1 a\1 y_2}f_3(xy_2^2).
\]
After the change of variables $xy_1^2\mapsto x, y_1\1 a\1 y_1\mapsto b,y_1\mapsto y,$ and $y_1\1 y_2\mapsto g$, the right-hand side above is
\[
\E_{x,y,b,g}\Delta_bf_3(x)\Delta_{g\1 bg}f_3(xy\1 g y g).
\]

Now,
\begin{align*}
\E_{y}\Delta_{g\1 b g}f_3(xy\1 g y g) &=\f{1}{|G|}\sum_{z\in G}\Delta_{g\1 b g}f_3(xz\1)1_{\{g\1 y\1 g\1 y:y\in G\}}(z) \\
&=\sum_{z\in G}\Delta_{g\1 b g}f_3(xz\1)\f{|Z(g\1)|}{|G|}1_{g\1 C(g\1)}(z),
\end{align*}
where $Z(g)$ denotes the centralizer of $g$ in $G$. Thus, since $|G|/|Z(g\1)|=|C(g\1)|$, we have that $\E_{y}\Delta_{g\1 b g}f_3(xy\1 g y g)=(\Delta_{g\1 b g}f_3*\mu_{g\1 C(g\1)})(x)$. Hence, 
\[
\E_{x,y,b,g}\Delta_{b}f_3(x)\Delta_{g\1 b g}f_3(xy\1 g y g)=\E_{x,b,g}\Delta_bf_3(x)(\Delta_{g\1 bg}f_3*\mu_{g\1 C(g\1)})(x).
\]

Now we diverge from Tao's argument and do not average $\mu_{g\1 C(g\1)}$ over $g$ in some coset of $Z(b)$. Writing $\Delta_{g\1 b g}f_3$ as $f_{g\1 b g}+\delta_{g\1 b g}$ where $\delta_{g\1 b g}=\E_{z}\Delta_{g\1 b g}f_3(z)$, we have
\[
\E_{b,g}|\E_x\Delta_{b}f_3(x)(\Delta_{g\1 b g}f_3*\mu_{g\1 C(g\1)})(x)| \leq \E_{b,g}|\E_x\Delta_{b}f_3(x)(f_{g\1 b g}*\mu_{g\1 C(g\1)})(x)|+D^{-1/2}
\]
if $G$ is $D$-quasirandom. This is because $f_3$ has mean zero and $\|\Delta_{g\1 b g}f_3\|_\infty\leq 1$, so that
\[
\E_{b,g}|\E_x\Delta_{b}f_3(x)(\delta_{g\1 b g}*\mu_{g\1 C(g\1)})(x)|\leq D^{-1/2}
\]
by Corollary~\ref{avg}.

By Cauchy-Schwarz, we have
\[
|\E_x\Delta_{b}f_3(x)(f_{g\1 b g}*\mu_{g\1 C(g\1)})(x)|\leq \f{1}{|G|^{1/2}}\|f_{g\1 b g}*\mu_{g\1 C(g\1)}\|_2,
\]
since $\|\Delta_b f_3\|_\infty\leq 1$. Inequality~(\ref{one}) from Lemma~\ref{convolution} says that
\[
\|f_{g\1 b g}*\mu_{g\1 C(g\1)}\|_2\leq |G|^{1/2}\left(\sum_{1\neq\rho\in\hat{G}}\|\hat{\mu}_{g\1 C(g\1)}(\rho)\|_{HS}^2\right)^{1/2},
\]
since $\|f_{g\1 b g}\|_2\leq \|\Delta_{g\1 b g}f_3\|_2\leq |G|^{1/2}$ and $f_{g\1 b g}$ has mean zero. Thus,
\[
\E_{b,g}|\E_x\Delta_{b}f_3(x)(f_{g\1 b g}*\mu_{g\1 C(g\1)})(x)|\leq \E_{g}\left(\sum_{1\neq\rho\in\hat{G}}\|\hat{\mu}_{g\1 C(g\1)}(\rho)\|_{HS}^2\right)^{1/2}.
\]
Another application of Cauchy-Schwarz in the $g$ variable yields
\[
\E_{b,g}|\E_x\Delta_{b}f_3(x)(f_{g\1 b g}*\mu_{g\1 C(g\1)})(x)|\leq \left(\E_{g}\sum_{1\neq\rho\in\hat{G}}\|\hat{\mu}_{gC(g)}(\rho)\|_{HS}^2\right)^{1/2},
\]
after making the change of variables $g\mapsto g\1$.

Now, $\hat{\mu}_{gC(g)}(\rho)=\f{\chi_\rho(g)}{d_\rho}\rho(g)$ and $\rho(g)^*\rho(g)=I_{d_\rho}$, so that 
\[
\|\hat{\mu}_{gC(g)}(\rho)\|_{HS}^2=\left|\f{\chi_{\rho}(g)}{d_\rho}\right|^2\tr(\rho(g)^*\rho(g))=\left|\f{\chi_{\rho}(g)}{d_\rho}\right|^2d_\rho.
\]
Thus,
\[
\E_{g}\sum_{1\neq\rho\in\hat{G}}\|\hat{\mu}_{gC(g)}(\rho)\|_{HS}^2=\sum_{1\neq\rho\in\hat{G}}\f{1}{d_\rho}\E_{g}\chi_{\rho}(g)\overline{\chi_{\rho}(g)}=\sum_{1\neq\rho\in\hat{G}}\f{1}{d_\rho}.
\]
We conclude that
\[
|\Lambda(f_1,f_2,f_3)|^4\leq \left(\sum_{1\neq\rho\in\hat{G}}\f{1}{d_\rho}\right)^{1/2}+\f{1}{D^{1/2}}\leq 2\left(\sum_{1\neq\rho\in\hat{G}}\f{1}{d_\rho}\right)^{1/2},
\]
because $D^{-1}\leq \sum_{1\neq\rho\in\hat{G}}1/d_\rho$.
\end{proof}

\section{Deduction of Corollaries~\ref{cor1} and~\ref{cor2}}\label{cors}
The \textit{Witten zeta function} $\zeta_G$ of a group $G$ is defined by
\[
\zeta_G(s) = \sum_{\rho\in\hat{G}}\f{1}{d_\rho^s}.
\]
Note that if $G$ is $D$-quasirandom, then $\zeta_G(1)-1\leq D^{-\delta}(\zeta_G(1-\delta)-1)$ for every $0<\delta\leq 1$.

There is an extensive literature on bounds for $\zeta_G(s)-1$ for various finite groups $G$ and applications of such bounds to problems in group theory; see~\cite{S} for a survey. We will use Liebeck and Shalev's bounds from~\cite{LS1} and~\cite{LS3}. Lemma 2.7 of~\cite{LS1} says that
\[
\zeta_{A_n}(s)=1+O(n^{-s})
\]
for all $s>0$, which, combined with Theorem~\ref{main}, immediately implies~(\ref{an}).

The relevant bounds for $\zeta_G(s)-1$ when $G$ is a group of Lie type can be found in~\cite{LS3}. The \textit{Coxeter number} of a simple algebraic group $G$ is the quantity $\f{\dim G}{\rank G}-1$. Liebeck and Shalev's first theorem in~\cite{LS3} concerns groups of bounded rank:
\begin{theorem}[Liebeck and Shalev, Theorem 1.1 of~\cite{LS3}]\label{ls1}
Fix a Lie type $L$ and let $h$ be the Coxeter number of the corresponding simple algebraic group. Let $G=G(q)$ be a finite quasisimple group of type $L$ over $\F_q$. Then for any fixed $s>2/h$, we have that
\[
\zeta_G(s)\to 1
\] 
as $q\to\infty$. In addition, we have that $1\ll_{L}\zeta_G(2/h)\ll_{L} 1$.
\end{theorem}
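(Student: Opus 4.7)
The plan is to partition $\hat G$ via Deligne--Lusztig theory into a bounded (in $L$) number of families whose sizes and degree exponents are polynomials in $q$, and then identify $s=2/h$ as the exponent at which the total contribution just fails to decay.

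First, I would use that every irreducible character $\chi$ of $G=G(q)$ lies in a rational Lusztig series $\mathcal{E}(G,[s])$ indexed by a semisimple conjugacy class $[s]$ in the dual group $G^*(q)$; standard theory gives $|\mathcal{E}(G,[s])|\leq|W|$, where $W$ is the Weyl group of $L$, and
\[
d_\chi=[G^*:C_{G^*}(s)]_{p'}\cdot d_\psi
\]
for some unipotent character $\psi$ of $C_{G^*}(s)$, with $d_\psi$ a polynomial in $q$ of nonnegative degree. Grouping the classes $[s]$ by the isogeny type $M$ of the connected centralizer $C_{G^*}(s)^\circ$, which takes only finitely many values depending on $L$, a Lang--Steinberg count gives $\asymp_L q^{\rank M}$ semisimple classes of each proper type $M$, so the contribution from type $M$ to $\zeta_G(s)-1$ scales like $q^{\rank M-sN(M)}$, where $N(M)=\tfrac{1}{2}((\dim G-\rank G)-(\dim M-\rank M))$ is the number of positive roots of $G$ outside $M$.

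Second, the critical task is to verify the root-system inequality $\rank M\leq (2/h)N(M)$ for every proper centralizer type $M$ that arises, with equality attained only for a bounded (in $L$) number of types. This is a finite case check, using the identity $\dim G=(h+1)\rank G$ together with the analogous identity applied to each centralizer, and, coupled with careful treatment of the ``central'' case $M=G$ (which contributes only $O_L(1)$ classes), it is the technical heart of the argument. Given this inequality, for $s>2/h$ each of the finitely many contributions is at most $C_L\cdot q^{-\delta_s}$ for some $\delta_s>0$, so $\zeta_G(s)-1=o(1)$ as $q\to\infty$; at $s=2/h$ the same bookkeeping yields $\zeta_G(2/h)-1\ll_L 1$, while the trivial character supplies the matching lower bound $\zeta_G(2/h)\geq 1$. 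The main obstacle is thus the root-system inequality, which forces the threshold $2/h$ to emerge from the combinatorial data of the root system of $L$.
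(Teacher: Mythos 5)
This statement is not proved in the paper at all: it is quoted verbatim as Theorem 1.1 of Liebeck--Shalev~\cite{LS3}, so the only ``proof'' the paper offers is the citation. Your sketch does follow the general shape of the original Liebeck--Shalev argument (partition $\hat G$ into Lusztig series, bound $d_\chi$ from below by $[G^*:C_{G^*}(s)]_{p'}$, count semisimple classes in the dual group by centralizer type), but as written it has a genuine gap at exactly the point you call the technical heart, and the inequality you propose to check is not even correctly formulated.

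The problem is the bookkeeping of exponents. Every centralizer $M=C_{G^*}(s)^\circ$ of a semisimple element contains a maximal torus, so $\rank M=\rank G$ for \emph{every} type $M$ that arises; since $\dim G=(h+1)\rank G$ gives $(2/h)N(G)=\rank G$ exactly (where $N(G)$ is the number of positive roots), your asserted inequality $\rank M\leq (2/h)N(M)$ with $N(M)<N(G)$ fails for every proper $M$ other than a maximal torus. Correspondingly, the count of semisimple classes of centralizer type $M$ is not $\asymp_L q^{\rank M}$: such elements lie (up to boundedly many components) in $Z(M)^\circ$, so the count is $\asymp_L q^{\dim Z(M)^\circ}=q^{\rank G-\rank_{ss}M}$, where $\rank_{ss}M$ is the semisimple rank. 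The inequality one actually needs is $\rank G-\rank_{ss}M\leq (2/h)\bigl(N(G)-N_M\bigr)$ for all centralizer subsystems, with equality precisely when $M$ is a torus; it is the regular semisimple classes (torus case) that produce the contribution of size $\asymp_L 1$ at $s=2/h$ and hence the statement $\zeta_G(2/h)\ll_L 1$ rather than $\to 1$. Until that corrected inequality is stated and verified (and the further issues you wave at --- disconnected centralizers, the quasisimple/isogeny passage between $G$ and $G^*$, and uniform control of the number of unipotent characters of the centralizers --- are handled), the sketch does not establish the theorem; for the purposes of this paper the honest course is simply to cite~\cite{LS3}, whose proof carries out this analysis in full.
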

The Coxeter number corresponding to $\SL_d(\F_q)$ is $\f{d^2-1}{d-1}-1=d$. Thus, Theorem~\ref{ls1} tells us that
\[
\zeta_{\SL_d(\F_q)}(1)-1\ll q^{-(d-1)(1-2/d)}(\zeta_{\SL_d(\F_q)}(2/d)-1)\ll_{d}q^{-(d-1)(1-2/d)},
\]
since $\SL_d(\F_q)$ is $\gg q^{d-1}$-quasirandom. This bound and Theorem~\ref{main} together show~(\ref{sld}).

Liebeck and Shalev's second theorem concerns collections of groups with unbounded rank:
\begin{theorem}[Liebeck and Shalev, Theorem 1.2 of~\cite{LS3}]\label{ls2}
Fix $s>0$. There exists an integer $r(s)$ such that for any finite quasisimple group of Lie type $G$ of rank at least $r(s)$, we have that
\[
\zeta_G(s)\to 1
\]
as $|G|\to\infty$.
\end{theorem}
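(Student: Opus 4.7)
The plan is to show that $\zeta_G(s) - 1 = \sum_{1 \neq \rho \in \hat{G}} d_\rho^{-s}$ can be made arbitrarily small by taking the rank of $G = G(q)$ large enough, uniformly in $q$ and in the Lie type. I would proceed by a dyadic decomposition of the sum according to representation degree, combining a lower bound on all nontrivial degrees with upper bounds on the number of representations of each degree.

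First, I would invoke the Landazuri--Seitz lower bound and its refinements by Seitz, Tiep, and others: every nontrivial irreducible representation of a finite quasisimple group of Lie type of rank $r$ has degree $d_\rho \geq c_L q^{f(r)}$, where $c_L > 0$ depends only on the Lie type $L$ and $f(r) \to \infty$ with $r$. This sets the base scale for the decomposition. Second, I would use Lusztig's classification via Deligne--Lusztig theory, which parametrizes $\hat{G}$ by pairs consisting of a semisimple conjugacy class in the dual group together with a unipotent character of its centralizer; the resulting explicit degree formulas yield an upper bound on $N(G, t) = \#\{\rho \in \hat{G} : d_\rho \leq t\}$ that grows only polynomially in $r$ and in $\log t$.

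Combining these inputs, the dyadic sum
\[
\sum_{1 \neq \rho \in \hat{G}} d_\rho^{-s} \leq \sum_{k \geq 0} N\bigl(G, 2^{k+1} c_L q^{f(r)}\bigr) \cdot \bigl(2^k c_L q^{f(r)}\bigr)^{-s}
\]
is controlled by a geometric series in $k$ scaled by the key factor $q^{-s f(r)}$. Since $f(r) \to \infty$ with $r$ and the polynomial counting bound is dominated by the geometric decay in $k$, the total can be made smaller than any prescribed threshold once $r \geq r(s)$, uniformly in $q$. This yields $\zeta_G(s) \to 1$ as $|G| \to \infty$ for groups of rank at least $r(s)$.

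The hard part will be bounding the number of representations whose degree is close to the Landazuri--Seitz minimum, where the naive counting estimate is weakest and the geometric decay has not yet taken over. This is the technical heart of~\cite{LS3}: one must use Lusztig series together with explicit information about unipotent characters of Levi subgroups and the Jordan decomposition of characters to control these near-minimal representations. Once this delicate count is in hand, the remaining dyadic contributions follow from routine geometric-series estimates, and the quantitative dependence $r(s)$ drops out of the near-minimal analysis.
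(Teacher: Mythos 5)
This statement is a theorem of Liebeck and Shalev that the paper cites verbatim (Theorem 1.2 of~\cite{LS3}) without reproducing its proof, so there is no internal argument in the paper to compare against; what follows is an assessment of your sketch on its own terms.

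Your high-level outline does track the actual Liebeck--Shalev strategy: use the Landazuri--Seitz-type lower bounds on nontrivial character degrees to set the entry scale, use Lusztig's Jordan decomposition of characters to control how many characters have small degree, and sum dyadically. However, there is a substantive error in the form of your counting bound. You assert that $N(G,t)=\#\{\rho\in\hat G: d_\rho\le t\}$ grows only polynomially in $r$ and in $\log t$. This cannot be right: the total number of irreducible characters of $G(q)$ of rank $r$ is of order $q^r$, and all degrees are at most $|G|^{1/2}\approx q^{O(r^2)}$, so taking $t=|G|^{1/2}$ would force $q^r\ll \mathrm{poly}(r,\log q)$, which is false. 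What Liebeck and Shalev actually establish is a bound of the shape $N(G,t)\ll t^{\,\varepsilon(r)}$, where the exponent $\varepsilon(r)$ (essentially $2/h$ in the fixed-type setting, $h$ the Coxeter number) tends to $0$ as the rank grows. The convergence of the dyadic sum $\sum_k N(G,2^{k+1}q^{f(r)})(2^kq^{f(r)})^{-s}$ hinges on having $\varepsilon(r)<s$, which is exactly why $r(s)$ must be taken large: one needs the exponent in the counting bound to drop below the prescribed $s$. With a polylogarithmic count the sum would trivially converge for every $s>0$ at every rank, and no threshold $r(s)$ would be needed at all, contradicting the statement you are trying to prove. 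So the engine of the argument---the character-counting bound and its dependence on rank---is misstated in your proposal, even though the surrounding scaffolding (dyadic decomposition, Landazuri--Seitz, Jordan decomposition) is the correct framework.
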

The group $\SL_d(\F_q)$ has rank $d-1$, so that for every $\ve>0$, Theorem~\ref{ls2} furnishes a $d(\ve)$ such that whenever $d\geq d(\ve)$, we have that $\zeta_{\SL_d(\F_q)}(\ve)-1\leq 1$. Then $\zeta_{\SL_d(\F_q)}(1)-1\ll q^{-(1-\ve)(d-1)}$, since $\SL_d(\F_q)$ is $\gg q^{d-1}$-quasirandom. This bound and Theorem~\ref{main} imply~(\ref{sl}), completing the proof of Corollary~\ref{cor2}.

To prove Corollary~\ref{cor1}, we use the classification of finite simple groups. Let $r(2/3)$ be as in Theorem~\ref{ls2}. Then for all finite simple groups of Lie type of rank at least $r(2/3)$, we have that $\zeta_G(2/3)-1\ll 1$.

 To deal with the finitely many families of finite simple groups of Lie type of rank less than $r(2/3)$, we apply Theorem~\ref{ls1}. The only family of finite simple groups of Lie type whose corresponding Coxeter number is less than $3$ is $\PSL_2(\F_q)$ (see~\cite{C}.) Thus, Theorem~\ref{ls1} implies that if $G$ is a finite simple group of Lie type of rank less than $r(2/3)$, then $\zeta_G(2/3)-1\ll 1$ as well, since such groups $G$ can have one of only finitely many Lie types.

Since $\zeta_{A_n}(2/3)-1\ll n^{-2/3}\ll 1$, we thus have that
\[
\zeta_{G}(2/3)-1\ll 1
\]
for all nonabelian finite simple groups $G$ other than $\PSL_2(\F_q)$. It follows that if $G\neq\PSL_2(\F_q)$ is a $D$-quasirandom nonabelian finite simple group, then $\zeta_G(1)-1\ll D^{-1/3}$. This combined with Theorem~\ref{main} proves Corollary~\ref{cor1} when $G\neq\PSL_2(\F_q)$, and the case $G=\PSL_2(\F_q)$ follows from Tao's result~(\ref{t}). Of course, Corollary~\ref{cor1} is trivial for abelian groups.
\bibliographystyle{plain}
\bibliography{bib}

\begin{thebibliography}{1}

\bibitem{BNP}
L{\'a}szl{\'o} Babai, Nikolay Nikolov, and L{\'a}szl{\'o} Pyber.
\newblock Product growth and mixing in finite groups.
\newblock In {\em Proceedings of the {N}ineteenth {A}nnual {ACM}-{SIAM}
  {S}ymposium on {D}iscrete {A}lgorithms}, pages 248--257. ACM, New York, 2008.

\bibitem{C}
Roger~W. Carter.
\newblock {\em Finite groups of {L}ie type}.
\newblock Pure and Applied Mathematics (New York). John Wiley \& Sons, Inc.,
  New York, 1985.
\newblock Conjugacy classes and complex characters, A Wiley-Interscience
  Publication.

\bibitem{D}
Persi Diaconis.
\newblock {\em Group representations in probability and statistics}.
\newblock Institute of Mathematical Statistics Lecture Notes---Monograph
  Series, 11. Institute of Mathematical Statistics, Hayward, CA, 1988.

\bibitem{G}
W.~T. Gowers.
\newblock Quasirandom groups.
\newblock {\em Combin. Probab. Comput.}, 17(3):363--387, 2008.

\bibitem{LaS}
Vicente Landazuri and Gary~M. Seitz.
\newblock On the minimal degrees of projective representations of the finite
  {C}hevalley groups.
\newblock {\em J. Algebra}, 32:418--443, 1974.

\bibitem{LS1}
Martin~W. Liebeck and Aner Shalev.
\newblock Fuchsian groups, coverings of {R}iemann surfaces, subgroup growth,
  random quotients and random walks.
\newblock {\em J. Algebra}, 276(2):552--601, 2004.

\bibitem{LS3}
Martin~W. Liebeck and Aner Shalev.
\newblock Character degrees and random walks in finite groups of {L}ie type.
\newblock {\em Proc. London Math. Soc. (3)}, 90(1):61--86, 2005.

\bibitem{S}
Aner Shalev.
\newblock Applications of some zeta functions in group theory.
\newblock In {\em Zeta functions in algebra and geometry}, volume 566 of {\em
  Contemp. Math.}, pages 331--344. Amer. Math. Soc., Providence, RI, 2012.

\bibitem{T}
Terence Tao.
\newblock Mixing for progressions in nonabelian groups.
\newblock {\em Forum Math. Sigma}, 1:e2, 40, 2013.

\end{thebibliography}

\end{document}